\theoremstyle{plain}
\numberwithin{equation}{section}
\newtheorem{theorem}{Theorem}[section]
\newtheorem{lemma}[theorem]{Lemma}
\newtheorem{corollary}[theorem]{Corollary}
\theoremstyle{remark}
\numberwithin{equation}{section}
\title[A pointwise inequality for a biharmonic equation ]{A pointwise inequality for a biharmonic equation with negative exponent and related problems}
\def\cfac#1{\ifmmode\setbox7\hbox{$\accent"5E#1$}\else\setbox7\hbox{\accent"5E#1}\penalty 10000\relax\fi\raise 1\ht7\hbox{\lower1.05ex\hbox to 1\wd7{\hss\accent"13\hss}}\penalty 10000\hskip-1\wd7\penalty 10000\box7 }
\author[Q. A. Ng\^o]{Qu\cfac oc Anh Ng\^o}
\address[Q.A. Ng\^o]{Department of Mathematics\\
College of Science, Vi\^{e}t Nam National University\\
H\`{a} N\^{o}i, Vi\^{e}t Nam.}
\email{\href{mailto: Q.A. Ng\^o <nqanh@vnu.edu.vn>}{nqanh@vnu.edu.vn}}
\email{\href{mailto: Q.A. Ng\^o <bookworm\_vn@yahoo.com>}{bookworm\_vn@yahoo.com}}
\author[V. H. Nguyen]{Van Hoang Nguyen}
\address[V.H. Nguyen]{Institut de Math\'ematiques de Toulouse\\
Universit\'e Paul Sabatier\\
31062 Toulouse c\'edex 09, France.}
\email{\href{mailto: V.H. Nguyen <van-hoang.nguyen@math.univ-toulouse.fr>}{van-hoang.nguyen@math.univ-toulouse.fr}}
\author[Q. H. Phan]{Quoc Hung Phan$^{\ast}$}
\address[Q. H. Phan]{Institute of Research and Development\\
Duy Tan University\\
Da NangVietnam.}
\email{\href{mailto: P.Q. Hung <phanquochung@dtu.edu.vn>}{phanquochung@dtu.edu.vn}}
\begin{document}

\allowdisplaybreaks

\begin{abstract}
	Inspired by a recent pointwise differential inequality for positive bounded solutions of the fourth-order H\'enon equation $\Delta^2 u = |x|^a u^p$ in ${\mathbb R}^n$ with $a \geqslant 0$, $p > 1$, $n \geqslant 5$ due to Fazly, Wei, and Xu [\textit{Anal. PDE} \textbf{8} (2015) 1541--1563], first for some positive constants $\alpha$ and $\beta$ we establish the following pointwise inequality
	\[
	\Delta u \geqslant \alpha u^{-\frac{q-1}2} + \beta u^{-1} |\nabla u|^2
	\]
in ${\mathbb R}^n$ with $n \geqslant 3$ for positive $C^4$-solutions of the fourth-order equation
	\[
	\Delta^2u=-u^{-q} \quad \text{ in } \mathbb R^n
	\]
where $q > 1$. Next, we prove a comparison property for Lane--Emden  system with exponents of mixed sign. Finally, we give an analogue result for parabolic models by establishing a comparison property for parabolic system of Lane--Emden type. To obtain all these results, a new argument of maximum principle is introduced, which allows us to deal with solutions with high growth at infinity. We expect to see more applications of this new method to other problems in different contexts.
\end{abstract}

\date{\bf \today \, at \currenttime}

\subjclass[2000]{35B45, 35B50, 35J60, 53C21, 35J30, 35K58}
\keywords{Biharmonic equation; Negative exponent; Pointwise estimate; Semilinear elliptic system; Parabolic system}
\maketitle


\let\thefootnote\relax\footnote{$^{\ast}$ Corresponding author.}

\section{Introduction}
Pointwise estimates for solutions of partial differential equations (PDEs) have had tremendous impact on the existing theory of nonlinear PDEs. Various celebrated pointwise estimates for certain elliptic equations and systems have profound applications to tackle important problems in the research fields. For interested readers, we refer to the introduction of \cite{FWX15} where several concrete examples of pointwise estimates as well as their impact are mentioned.

The motivation of writing this paper goes back to the work of Fazly, Wei, and Xu \cite{FWX15} in which an important pointwise differential inequality for positive bounded solutions of the fourth-order H\'enon equation
\begin{equation}\label{positive}
\Delta^2 u = |x|^a u^p
\end{equation}
in ${\mathbb R}^n$ with $a \geqslant 0$, $p>1$, and $n \geqslant 5$ was found. To understand the surprising pointwise differential inequality in \cite{FWX15} and its role, let us consider the special case $a = 0$. In this scenario, the equation \eqref{positive} can be rewritten as the system
\begin{equation}\label{eqSystemPositiveExponent}
\left\{
\begin{split}
-\Delta u &=v,\\
-\Delta v &= u^p,\\
\end{split}
\right.
\end{equation}
in ${\mathbb R}^n$. Apparently, this system is closely related to the famous Lane--Emden system 
\begin{equation}\label{LEsystem}
\left\{
\begin{split}
-\Delta u &=v^r,\\
-\Delta v &= u^p\\
\end{split}
\right.
\end{equation}
in ${\mathbb R}^n$ where, without loss of generality, we may assume $p\geqslant r>0$. A very interesting question concerning the system~\eqref{LEsystem} is \textit{the Lane--Emden conjecture}, which states that system~\eqref{LEsystem} has no entire positive solution if and only if 
\begin{equation}\label{hyperbola}
\frac{1}{p+1}+\frac{1}{r+1}>1-\frac{2}{n}. 
\end{equation}
Toward tackling the Lane--Emden conjecture, a pointwise differential inequality for positive solutions of \eqref{eqSystemPositiveExponent} was found by Souplet. To be more precise, it was proved in \cite[Lemma 2.7]{Sou09} that
\begin{equation}\label{souplet_ine}
\frac{u^{p+1}}{p+1}\leqslant \frac{v^{r+1}}{r+1}.
\end{equation}
This inequality was then extended to positive solutions of the H\'enon--Lane--Emden system in \cite{Pha12}. Such a comparison as in \eqref{souplet_ine} is highly important as it allows us to obtain various Liouville-type results for stable solutions, see e.g., \cite{WY13, WXY13, FG13, Cow13}. We would like to mention that this kind of comparison property was proved for Dirichlet problem in bounded domain by Bidaut-V\'eron in \cite[Remark 3.2]{BV99}. 

Clearly, the inequality \eqref{souplet_ine} in the particular case $r=1$ provides a pointwise inequality for solutions of the equation~\eqref{eqSystemPositiveExponent} with $a=0$ as 
\begin{equation}\label{eq:WeakPointwiseInequalityPositive}
- \Delta u \geqslant \sqrt{\frac 2{p+1}} u^{\frac{p+1}2}
\end{equation}
in ${\mathbb R}^n$. Motivated by the pointwise inequality \eqref{eq:WeakPointwiseInequalityPositive} for solutions of \eqref{positive} with $a=0$, the main result of \cite{FWX15} is to improve this inequality by slightly increasing the coefficient $\sqrt{2/(p+1)}$ and adding a positive term involving gradient $u^{-1}|\nabla u|^2$ to the right hand side. To be exact, the inequality
\begin{equation}\label{eqFWXInequality}
- \Delta u \geqslant \sqrt{\frac 2{p + 1 - c_n}} u^{\frac{p+1}2} + \frac 2{n-4} \frac{|\nabla u|^2}{u}
\end{equation}
was proved, where $c_n = 8/(n(n-4))$.

In the present paper, we are interested in a counterpart of \cite{FWX15} by considering pointwise inequalities for positive $C^4$-solutions of the elliptic equation
\begin{equation}\label{eqMain}
\Delta^2 u= - u^{-q}
\end{equation}
in ${\mathbb R}^n$ with a negative exponent $-q <-1$. To understand why we work on such an equation, let us mention that equations of the form \eqref{positive} are essentially the projection of the fourth-order $Q$-curvature equation on $\mathbb S^n$ onto ${\mathbb R}^n$ with $n \geqslant 5$ via the stereographic projection. Under the dimension constraint $n \geqslant 5$, the resulting equations have positive exponents. However, since the $Q$-curvature equations can be posed on $\mathbb S^3$, also via the stereographic projection, the projected equations on $\mathbb S^3$ now have negative exponents of the form \eqref{eqMain}, see \cite{CX09}. However, in order to provide a good tool for analysis, Eq. \eqref{eqMain} can also be studied in ${\mathbb R}^n$ with arbitrary $n \geqslant 3$. For interested readers, we refer to \cite{KR03, Xu05, GW08, CX09, DFG10, Gue12, GW14, DN16} for various studies related to Eq. \eqref{eqMain}.

Inspired by the pointwise inequality \eqref{eqFWXInequality} for solutions of \eqref{positive}, the first purpose of this paper is to establish certain pointwise inequality for positive $C^4$-solutions of the equation \eqref{eqMain}. To seek for a candidate, we observe that a weak inequality in a same fashion of \eqref{eq:WeakPointwiseInequalityPositive} was already found by Guo and Wei in \cite[Proposition 2.5]{GW14}. The weak inequality in \cite{GW14} can be stated as follows: Let $u>0$ be a $C^4$-solution to \eqref{eqMain} in ${\mathbb R}^n$ with $q > 1$. Then the following inequality holds
\begin{equation}\label{eq:WeakPointwiseInequality}
\Delta u \geqslant \sqrt{\frac 2{q-1}} u^{-\frac{q-1}2} 
\end{equation}
in ${\mathbb R}^n$ with $n \geqslant 3$. Inspired by the inequality \eqref{eqFWXInequality} for positive bounded solutions of \eqref{positive} and the inequality \eqref{eq:WeakPointwiseInequality} for positive solutions of \eqref{eqMain}, the first task of this paper is to prove the following result.

\begin{theorem}\label{thmPointwise}
Let $u>0$ be a $C^4$-solution to \eqref{eqMain} in ${\mathbb R}^n$ with $n \geqslant 3$  and $q>1$. Assume that two positive constants $\alpha$, $\beta$ satisfy 
\begin{equation}\label{conditionI}
\left\{
\begin{split}
\alpha &\leqslant \frac{1}{2}, \\
\beta &\leqslant \sqrt{\frac{2}{q-1-4\alpha/n}}, \\
q &\geqslant 3\alpha+\sqrt{9\alpha^2+(1-2\alpha)(1+16\alpha/n)}.
\end{split}
\right.
\end{equation}
Then the following pointwise inequality 
\begin{equation}\label{eqPointwiseInequality}
\Delta u \geqslant \alpha u^{-1} |\nabla u|^2 +\beta u^{-\frac{q-1}2} 
\end{equation}
holds in ${\mathbb R}^n$ for any solution $u$ satisfying the growth condition
\begin{equation}\label{hypo}
\tag{$\mathcal H$}
u(x)=o \big(|x|^{2/(1-\gamma)} \big)\quad  \text{ as } |x|\to \infty,
\end{equation} 
where $\gamma$ is arbitrary in $[0,1)$ such that
\begin{align}\label{ConditionsForH2}
\left\{
\begin{aligned}
\alpha+\frac{4\alpha(1-2\alpha)}{n} -3\alpha\gamma-\gamma^2+\gamma>&0,\\
q-1-\frac{8\alpha}{n}-2\gamma >&0.
\end{aligned}
\right.
\end{align} 

In particular, the pointwise inequality \eqref{eqPointwiseInequality} always holds under the  assumption \eqref{conditionI}  and 
\begin{equation} \label{O2}
u(x)=O(|x|^2) \quad \text{ as } |x|\to \infty.
\end{equation}
\end{theorem}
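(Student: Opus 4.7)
The plan is to define the auxiliary function
\[
P := \Delta u - \alpha u^{-1}|\nabla u|^2 - \beta u^{-(q-1)/2}
\]
and to prove $P \geqslant 0$ pointwise on $\mathbb{R}^n$ by a maximum principle argument tailored to the unbounded domain. The weak pointwise inequality \eqref{eq:WeakPointwiseInequality} of Guo--Wei gives the baseline $\Delta u \geqslant \sqrt{2/(q-1)}\, u^{-(q-1)/2}$, which will be inserted throughout the calculation to absorb several error terms, and the bound $\alpha \leqslant 1/2$ provides the room to play the two positive contributions on the right of \eqref{eqPointwiseInequality} against each other.

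The first step is to derive a differential inequality for $P$ of the schematic shape $\Delta P \leqslant -A\, P - B$ with $A, B \geqslant 0$ whenever \eqref{conditionI} holds. To this end, I would expand $\Delta P$ using $\Delta(\Delta u) = -u^{-q}$, the identity for $\Delta(u^{-1}|\nabla u|^2)$ (which produces a $2u^{-1}|\nabla^2 u|^2$ term and cross-terms with $\nabla u \cdot \nabla \Delta u$), and the identity for $\Delta(u^{-(q-1)/2})$ (which produces a multiple of $u^{-(q+1)/2}\Delta u$ and a multiple of $u^{-(q+3)/2}|\nabla u|^2$). Applying the refined Kato-type inequality $|\nabla^2 u|^2 \geqslant (\Delta u)^2/n$ and then substituting $\Delta u = P + \alpha u^{-1}|\nabla u|^2 + \beta u^{-(q-1)/2}$ wherever possible rewrites the whole expression as a binary polynomial in the non-negative quantities $X := u^{-(q-1)/2}$ and $Y := u^{-1}|\nabla u|^2$, plus a term linear in $P$. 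The three conditions in \eqref{conditionI} are precisely those that make the resulting quadratic form in $(X, Y)$ non-negative, the last one encoding the discriminant constraint.

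The second, and most delicate, step is to implement the maximum principle on $\mathbb{R}^n$. Since $P$ is not a priori controlled at infinity, I would work with a weighted/perturbed version such as
\[
P_\varepsilon(x) := P(x) + \varepsilon (1+|x|^2)^{\sigma},
\]
the exponent $\sigma = \sigma(\gamma)$ being chosen in accordance with \eqref{hypo} so that $\liminf_{|x|\to\infty} P_\varepsilon \geqslant 0$. If $P$ were negative somewhere, then $P_\varepsilon$ would attain a negative interior minimum at some $x_0$, at which $\Delta P_\varepsilon(x_0) \geqslant 0$; combined with the differential inequality from Step 1 this yields a contradiction provided the extra terms arising from the perturbation are dominated by the quadratic form, which is exactly what the two bounds in \eqref{ConditionsForH2} ensure. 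Letting $\varepsilon \to 0$ then gives $P \geqslant 0$ on $\mathbb{R}^n$. The particular case \eqref{O2} corresponds to $\gamma = 0$, in which \eqref{ConditionsForH2} reduces to $\alpha\bigl(1 + 4(1-2\alpha)/n\bigr) > 0$ and $q - 1 - 8\alpha/n > 0$, both immediate consequences of \eqref{conditionI}.

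The principal obstacle I expect is the algebraic bookkeeping in Step 1: expanding $\Delta P$ generates numerous cross-terms in $\nabla u$, $\nabla^2 u$, and $\nabla \Delta u$, and reorganizing them into a manifestly non-negative form in $(X, Y)$ requires the Kato inequality and the Guo--Wei baseline \eqref{eq:WeakPointwiseInequality} to be applied at just the right places so that no positive contribution is wasted. A secondary difficulty is pinning down the admissible range of the weight exponent $\sigma$ that makes the weighted maximum principle close under the high-growth regime \eqref{hypo}; the conditions \eqref{ConditionsForH2} are tuned to guarantee precisely this coupling between $\alpha$ and $\gamma$.
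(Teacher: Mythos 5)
Your Step 1 is essentially the paper's Lemma~\ref{lem1}: the auxiliary function $P$ is exactly $-w$ with $w$ as in \eqref{def:w}, the trace inequality $\|\nabla^2 u\|^2\geqslant n^{-1}(\Delta u)^2$ (applied in the paper to the shifted Hessian $\nabla^2u+(\alpha-1)u^{-1}\nabla u\otimes\nabla u$) plays the same role, and the three conditions in \eqref{conditionI} are indeed exactly $I_1,I_2,I_3\geqslant 0$ for the quadratic form in $(A,B)=(u^{-1}|\nabla u|^2,\,u^{-(q-1)/2})$. Two remarks on your schematic, though: the inequality one actually obtains is not of the form $\Delta P\leqslant -AP-B$; it necessarily carries a first-order term $\nabla u\cdot\nabla P$ (coming from $\nabla u\cdot\nabla\Delta u$) and, crucially, a term $-\tfrac{2\alpha}{n}u^{-1}P^2$ quadratic in $P$. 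Both are indispensable in Step 2, and neither appears in your plan. (Also, the Guo--Wei baseline \eqref{eq:WeakPointwiseInequality} is not needed anywhere in the computation.)

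The genuine gap is in Step 2. The perturbation $P_\varepsilon=P+\varepsilon(1+|x|^2)^\sigma$ requires $\liminf_{|x|\to\infty}P_\varepsilon\geqslant 0$, i.e.\ a lower bound $P\geqslant-\varepsilon(1+|x|^2)^\sigma$ near infinity. But $P=\Delta u-\alpha u^{-1}|\nabla u|^2-\beta u^{-(q-1)/2}$, and the hypothesis \eqref{hypo} controls only the growth of $u$ itself, not of $\Delta u$ or of $|\nabla u|^2/u$; there is no a priori elliptic estimate available here (one would need a positive lower bound on $u$ to control the right-hand side $u^{-q}$), so no choice of $\sigma$ makes this liminf statement follow from \eqref{hypo}. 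This is precisely the difficulty the paper's ``new maximum principle argument'' is designed to bypass: one sets $w_\gamma=u^{-\gamma}w$, multiplies by a cut-off $\varphi_R$ so that $\varphi_Rw_\gamma$ attains its maximum $M_R$ at some $x_R\in B_R$, absorbs the first-order term by Cauchy--Schwarz into $L_1A$, and then the \emph{quadratic} term $J_1w_\gamma^2$ delivers $C\,u(x_R)^{1-\gamma}R^{-2}\geqslant J_1M_R$; only at this final stage is \eqref{hypo} invoked, to make the left-hand side $o(1)$, and no derivative bounds on $u$ are ever needed. Without the quadratic term and the localization of the maximum point inside $B_R$, your argument cannot connect the growth of $u$ to a contradiction, so as written the proof does not close. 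The conditions \eqref{ConditionsForH2} are not about a weight exponent; they guarantee the positivity of the coefficients $L_1,L_2$ produced by the conjugation $w\mapsto u^{-\gamma}w$.
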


We note that the assumptions  $n \geqslant 3$  and $q>1$ in Theorem~\ref{thmPointwise} are necessary, since there is no positive classical solution when $n=1,2$, or $n\geq 3$ and $0<q\leq 1$, see \cite[Theorem 1.3 and Remark 4.4]{LY16}. Also, the assumption \eqref{hypo} (or \eqref{O2} in particular)  is not too strict since, it was shown in \cite[Corollary 4.3]{LY16} that, all radial solutions of Eq. \eqref{eqMain} in ${\mathbb R}^n$ grow at most quadratically at infinity. 

The proof of Theorem~\ref{thmPointwise} is a refinement of the technique of Fazly, Wei, and Xu used in \cite{FWX15}, which is based on a Moser iteration-type argument, maximum principle, and a feedback argument, see \cite{Sou09}. However, unlike the case of equations with positive exponents, significant new ideas have to  be introduced due to the difficulties arising in the case of negative exponents. For instance, the technique in \cite{FWX15} requires a sufficiently decay of solutions at infinity in order to control the integrals on the sphere $|x|=R$ as $R\to \infty$. Hence, the boundedness assumption of solutions is initially assumed in \cite{FWX15} to get more decay estimates. Such a boundedness assumption is reasonable since all radial positive solutions of \eqref{positive} with $a=0$ is of growth $|x|^{-4/(p-1)}$ as $|x|\to \infty$, for example see \cite{GG06}. 

As noticed above, the situation is more challenging in case of \eqref{eqMain} with negative exponent since there do exist solutions which grow linearly or super-linearly at infinity, see e.g., \cite{CX09, KR03, Gue12, DN16}. Motivated by recent paper of Cheng, Huang, and Li \cite{CHL15}, we develop a new argument of maximum principle, which allows to deal with all solutions with high growth (or even oscillation) at infinity, see assumption \eqref{hypo}. Surprisingly, it is worth noticing that our new argument of maximum principle can be applied to the parabolic system of Lane-Emden type,  see Theorem~\ref{th16} below.
 
Concerning the pointwise inequality for problem~\eqref{eqMain}, the Moser-type iteration argument introduced in \cite{FWX15} encounters some restriction on the control of $I^{(3)}_{\varepsilon, \alpha_k,\beta_k}$ given in \cite[page 1551]{FWX15}, which leads to an additional condition that $q$ must be sufficiently large. To avoid any restriction on $q$, we introduce a new approach without using iteration argument to establish the pointwise inequality which is capable of handling any $q>1$. When looking back at \eqref{conditionI}, it is likely to restrict $q$. However, a careful examination shows that
\[
q_\alpha:=3\alpha+\sqrt{9\alpha^2+(1-2\alpha)(1+16\alpha/n)}\to 1
\]
as $\alpha\searrow 0$ and
\[
q_\alpha:=3\alpha+\sqrt{9\alpha^2+(1-2\alpha)(1+16\alpha/n)} \to 3
\]
as $\alpha\nearrow 1/2$. In other words, our pointwise estimate \eqref{eqPointwiseInequality} can handle any $q>1$; however, as a price we pay, the closer $q$ to $1$ we want, the smaller $\alpha$ we must assume.

As a direct consequence of Theorem~\ref{thmPointwise}, by taking $\alpha=1/2$, we obtain the following pointwise inequality:
\begin{corollary}\label{cor1}
Let $u>0$ be a $C^4$-solution to \eqref{eqMain} in ${\mathbb R}^n$ with $n \geqslant 3$ which satisfies the growth assumption
\begin{equation}\label{hypo6}
u(x)=o(|x|^{\tau})
\end{equation}  
as $|x|\to \infty$, with 
$$\tau:= \min\bigg\{4,\; \frac{4}{(-q+3+4/n)_{+}}\bigg\}.$$
Then the following pointwise inequality 
\begin{equation}\label{cor2ine}
\Delta u \geqslant \frac{1}{2} \frac{ |\nabla u|^2}u +\sqrt{\frac{2}{q-1-2/n}} u^{-\frac{q-1}2} 
\end{equation}
holds in ${\mathbb R}^n$ for all $q\geqslant 3$.
\end{corollary}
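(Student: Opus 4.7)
The strategy is to apply Theorem~\ref{thmPointwise} with $\alpha=\tfrac{1}{2}$ and $\beta=\sqrt{2/(q-1-2/n)}$, so that the conclusion \eqref{eqPointwiseInequality} becomes \eqref{cor2ine} word for word. The corollary then reduces to (i) checking the algebraic conditions \eqref{conditionI}, (ii) exhibiting an admissible $\gamma\in[0,1)$ in \eqref{ConditionsForH2}, and (iii) reconciling the growth hypothesis \eqref{hypo6} with the growth hypothesis \eqref{hypo} of the theorem.

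For (i): the first line of \eqref{conditionI} is saturated, the second is saturated by the choice of $\beta$, and the third line simplifies because the factor $1-2\alpha$ vanishes at $\alpha=1/2$, giving
\[
3\alpha+\sqrt{9\alpha^{2}+(1-2\alpha)(1+16\alpha/n)}\Big|_{\alpha=1/2}=\tfrac{3}{2}+\tfrac{3}{2}=3,
\]
which matches the hypothesis $q\geqslant 3$ of the corollary exactly. For (ii), substituting $\alpha=\tfrac{1}{2}$ into \eqref{ConditionsForH2} reduces its two strict inequalities to $1-\gamma-2\gamma^{2}>0$ and $q-1-4/n-2\gamma>0$, i.e.\ $\gamma\in[0,\Gamma)$ with $\Gamma:=\min\{\tfrac{1}{2},(q-1-4/n)/2\}$. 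As $\gamma$ runs through this interval, $2/(1-\gamma)$ runs through $[2,\tau)$ with exactly $\tau=\min\{4,\,4/(-q+3+4/n)_{+}\}$; so the number $\tau$ of \eqref{hypo6} is nothing other than the optimal ceiling coming from \eqref{ConditionsForH2}.

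Step (iii) is the delicate point, and I expect it to be the main obstacle. For any $\gamma<\Gamma$ Theorem~\ref{thmPointwise} demands $u=o(|x|^{2/(1-\gamma)})$ with exponent strictly below $\tau$, whereas the corollary's hypothesis $u=o(|x|^{\tau})$ does not in general yield $u=o(|x|^{s})$ for $s<\tau$. I plan to close the gap by inspecting the proof of Theorem~\ref{thmPointwise}: the growth condition enters there only through the vanishing of certain spherical integrals on $\{|x|=R\}$ as $R\to\infty$; at the borderline value $\gamma=\Gamma$ these integrals become of borderline order $R^{A}u(R)^{B}$, and it is precisely the \emph{strict} little-$o$ (rather than big-$O$) decay $u(x)=o(|x|^{\tau})$ that forces them to vanish. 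In other words, Theorem~\ref{thmPointwise} continues to hold at the endpoint $\gamma=\Gamma$ under the little-$o$ hypothesis in \eqref{hypo6}, and with this extension the pointwise inequality \eqref{cor2ine} is immediate.
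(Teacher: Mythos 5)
Your steps (i) and (ii) carry out exactly the verification that the paper leaves implicit: with $\alpha=1/2$ the conditions \eqref{conditionI} collapse to $q\geqslant 3$ and $\beta\leqslant\sqrt{2/(q-1-2/n)}$, and \eqref{ConditionsForH2} becomes $\gamma<\Gamma:=\min\{1/2,\,(q-1-4/n)/2\}$, whose supremum of $2/(1-\gamma)$ is precisely $\tau$. This is the same route as the paper, which states the corollary as a ``direct consequence'' of Theorem~\ref{thmPointwise} with $\alpha=1/2$ and offers nothing further. You have also correctly isolated the borderline issue the paper glosses over: $u=o(|x|^{\tau})$ does not imply $u=o(|x|^{2/(1-\gamma)})$ for any admissible $\gamma<\Gamma$.

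Your repair in step (iii), however, does not work as described. First, the proof of Theorem~\ref{thmPointwise} contains no spherical integrals; it is a cut-off/maximum-principle argument, and the growth hypothesis enters only through the pointwise quantity $u(x_R)^{1-\gamma}R^{-2}$ in \eqref{endest}. You are right that this particular quantity still tends to zero at $\gamma=\Gamma$ under the little-$o$ hypothesis \eqref{hypo6}. But what actually degenerates at the endpoint is a coefficient in \eqref{28replace}, not the growth estimate. When $\Gamma=(q-1-4/n)/2<1/2$ (only possible for $n=3$ and $3\leqslant q<10/3$), it is $L_2$ that vanishes; since the proof merely discards the nonnegative term $L_2Bw_\gamma$, taking $\gamma=\Gamma$ is harmless and your endpoint argument goes through. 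But when $\Gamma=1/2$ --- which is the case for every $n\geqslant 4$ and for $n=3$ with $q\geqslant 10/3$ --- one has $L_1=\tfrac12-\tfrac{\gamma}{2}-\gamma^2=0$ at $\gamma=1/2$, and the proof requires $L_1>\varepsilon>0$ to absorb the error term $-\varepsilon A$ produced by Cauchy--Schwarz on the cross term $\nabla u\cdot\nabla\varphi_R$; at the maximum point there is no other term capable of absorbing $A$. Hence the assertion that ``Theorem~\ref{thmPointwise} continues to hold at the endpoint $\gamma=\Gamma$'' is unjustified precisely in the main case. To close the argument as you have set it up, you would need either a genuinely new treatment of the gradient term at $\gamma=1/2$, or to run the corollary under $u=o(|x|^{s})$ for some $s<\tau$ (e.g.\ the hypothesis \eqref{O2}, which is what the paper relies on in applications). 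As written, the gap you identified remains open in your proposal --- and, to be fair, it is not addressed by the paper's one-line justification either.
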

In Corollary \ref{cor1} above, it is worth noticing that 
$$  \min\bigg\{4,\; \frac{4}{(-q+3+4/n)_{+}}\bigg\}\geqslant  \min\big\{4,\; n\big\}\geqslant 3$$
for any $q\geqslant 3$. In addition, as an immediate consequence of \eqref{cor2ine}, we can show that under the conformal change $g = u^{2/(n-2)} g_{{\mathbb R}^n}$, this new metric has strictly negative scalar curvature everywhere in ${\mathbb R}^n$. This is because
\[
{\rm scal}_g = - \frac{2(n-1)}{n-2} \Big( \Delta u - \frac 12 \frac{|\nabla u|^2}{u} \Big) u^{-\frac{n}{n-2}}.
\]

In Theorem~\ref{thmPointwise}, we restrict ourselves to the case $\alpha, \beta>0$ for simplicity of the presentation. However, by an argument totally similar to the one used in the proof of Theorem~\ref{thmPointwise}, one can separately consider the case $\alpha=0$ or $\beta=0$. In the case $\alpha=0$, via this technique the largest $\beta$ is $\sqrt{2/(q-1})$, leading us to the pointwise inequality \eqref{eq:WeakPointwiseInequality} without assumption \eqref{hypo}. In the case $\beta=0$,  noting that the conditions $I_3\geqslant 0$ and $K_2>0$ can be relaxed when considering the inequality \eqref{keyinequality}, we then get the following result: 

\begin{corollary}\label{cor3}
	Let $u>0$ be a $C^4$-solution to \eqref{eqMain} in ${\mathbb R}^n$ with $n \geqslant 3$ which satisfies the growth assumption
	\begin{equation}\label{hypo7}
	u(x)=o(|x|^{4})
	\end{equation}  
as $|x|\to \infty$. Then the following pointwise inequality 
	\begin{equation}\label{cor3ine}
	\Delta u \geqslant \frac{1}{2} \frac{ |\nabla u|^2}u
	\end{equation}
	holds in ${\mathbb R}^n$ for all $q>1$.
\end{corollary}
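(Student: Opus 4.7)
The plan is to specialize the proof of Theorem~\ref{thmPointwise} by taking $\alpha=\tfrac12$, $\beta=0$, and the borderline growth exponent $\gamma=\tfrac12$, which matches $2/(1-\gamma)=4$ in \eqref{hypo7}. With $\beta=0$, the auxiliary function at the heart of the proof reduces to $w := \Delta u - \tfrac12 u^{-1}|\nabla u|^2$, and the target inequality \eqref{cor3ine} is exactly the statement $w\ge 0$ on $\mathbb R^n$. The equation $\Delta^2 u = -u^{-q}$ then enters only through the derivation of the key differential inequality satisfied by $w$, and not through any exponent-dependent coefficient that we need to keep track of.

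I would re-run the Moser-type computation and maximum principle argument underlying Theorem~\ref{thmPointwise}, but carefully track the dependence of each coefficient on $\beta$. The positivity requirements $I_3\geqslant 0$ and $K_2>0$ alluded to in the statement arise exclusively from the $\beta u^{-(q-1)/2}$ summand in $w$; with $\beta=0$ they become vacuous. Likewise, the first inequality in \eqref{ConditionsForH2} degenerates to the equality $0=0$ under $\alpha=\gamma=\tfrac12$ (since $1-2\alpha=0$ kills the $n$-dependent term and $\tfrac12-\tfrac34-\tfrac14+\tfrac12=0$), and the second inequality, which asks $q>2+4/n$, has the sole purpose of controlling the interaction between the $u^{-(q-1)/2}$ term and the gradient weight. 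Both can therefore be dropped once $\beta=0$, so no restriction on $q>1$ survives and the only constraint inherited from \eqref{conditionI} is the automatically satisfied $\alpha\le\tfrac12$.

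With the coefficient obstructions removed, I would invoke the new maximum principle inspired by \cite{CHL15} exactly as in the proof of Theorem~\ref{thmPointwise}: test the key inequality \eqref{keyinequality} (which is now shorter, as all $\beta$-contributions vanish) against the same family of cutoff functions, and let the cutoff radius $R\to\infty$. The growth assumption $u(x)=o(|x|^4)$ is precisely at the threshold $2/(1-\gamma)=4$ corresponding to $\gamma=\tfrac12$, and this is exactly the decay rate of the test weight required to annihilate the boundary contributions from $\{|x|=R\}$. Concluding $w\ge 0$ then gives \eqref{cor3ine}.

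The main obstacle is the bookkeeping in the second step: one must verify, term by term in \eqref{keyinequality}, that every coefficient formerly needing strict positivity is either multiplied by $\beta$ (hence vanishes) or originates from an AM--GM/Cauchy--Schwarz estimate that was applied only to absorb cross-terms produced by the $\beta u^{-(q-1)/2}$ piece. Once this is in place, the borderline configuration $\alpha=\gamma=\tfrac12$ is genuinely admissible, and the argument runs uniformly in $q>1$.
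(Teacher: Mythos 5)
Your overall route --- set $\beta=0$ and $\alpha=\tfrac12$ in the framework of Theorem~\ref{thmPointwise} and observe that the conditions $I_3\geqslant 0$, $K_2>0$ (and hence $L_2>0$) enter only through the $\beta B$ summand of $w$ and become vacuous --- is exactly the one the paper intends, and that part of your bookkeeping is correct: with $\beta=0$, $\alpha=\tfrac12$ one has $I_1=0$, $I_2=1$, $K_1=1$, the equation contributes only the favorable term $B^2=u^{1-q}\geqslant 0$, and no restriction on $q>1$ survives.

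The gap is your treatment of the first inequality in \eqref{ConditionsForH2}, i.e.\ of $L_1=K_1\alpha-3\gamma\alpha-\gamma^2+\gamma$. You assert that this condition ``has the sole purpose of controlling the interaction between the $u^{-(q-1)/2}$ term and the gradient weight'' and can therefore be dropped once $\beta=0$. That is not its role: $L_1$ is the coefficient of $Aw_\gamma$ in \eqref{28replace}, and in Case~2 of the maximum-principle argument the term $L_1Aw_\gamma$ is precisely what absorbs the error $-\varepsilon Aw_\gamma$ produced by the Cauchy--Schwarz estimate of the cross term $2J_2\varphi_R^{-1}\nabla u\cdot\nabla\varphi_R$. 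That cross term originates from the gradient part $\alpha A$ of $w$ (via $\nabla u\cdot\nabla w$ in \eqref{keyinequality}) and is present for every $\beta$, including $\beta=0$; the argument needs $L_1>\varepsilon>0$ strictly. At $\alpha=\gamma=\tfrac12$ one computes $L_1=\tfrac12-\tfrac{\gamma}{2}-\gamma^2=0$, so the absorption fails: you are left with $-\varepsilon A(x_R)w_\gamma$ and no control on $A(x_R)=u^{-1}|\nabla u|^2$ at the maximum point, while trying instead to absorb the cross term into $J_1\varphi_Rw_\gamma^2$ leaves an unbounded $R^{-2}A(x_R)$ remainder. What your argument actually delivers is the conclusion under $u=o\big(|x|^{2/(1-\gamma)}\big)$ for some $\gamma<\tfrac12$, i.e.\ under $u=o(|x|^{s})$ for some $s<4$; since $o(|x|^4)$ does not imply $o(|x|^{s})$ for any $s<4$, the endpoint growth \eqref{hypo7} is not reached by your reasoning, and the claim that ``the borderline configuration $\alpha=\gamma=\tfrac12$ is genuinely admissible'' is unjustified as written. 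You need either a different splitting of the cutoff cross term or some additional information (e.g.\ a gradient bound) to handle $\gamma=\tfrac12$.
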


We see that the growth assumption on solutions is not required when $\alpha=0$. Heuristically, this is due to the fact that the term $u^{-(q-1)/2}$ looks small in comparison with the term $\Delta u$ when $u$ grows fast. However, it is an open question whether the growth assumption \eqref{hypo} is necessary in the case $\alpha>0$.  From the weak inequality \eqref{eq:WeakPointwiseInequality}, even when $\beta$ is small, it seems difficult to obtain \eqref{eqPointwiseInequality} for some $\alpha>0$ without assuming any restriction on the growth of solutions at infinity. This is supported by the fact that the term $u^{-(q-1)/2}$ is not large enough when $u$ grows fast. 

The second topic concerns the comparison property of the Lane--Emden system with exponents of mixed sign. Let us consider the positive solutions of the following system 
\begin{equation}\label{system}
\begin{cases}
 \Delta u = v^r,\\
\Delta v= -u^{-q},
\end{cases}
\end{equation}
 in ${\mathbb R}^n$, where $q>1$ and $r>0$.   Due to the super polyharmonic property in \cite[Lemma 4.1]{LY16}, Eq. \eqref{eqMain} can be considered as a special case of system \eqref{system} when $r=1$. Our second result is the following:
\begin{theorem}\label{th2}
Let $(u,v)$ be a positive solution of system~\eqref{system}  in $\mathbb R^n$ with $q>1$ and $r>0$. Then we have the  following pointwise inequality
\begin{equation}\label{eqMain1}
\frac{v^{r+1}}{r+1} \geqslant \frac{u^{-q+1}}{q-1}
\end{equation}
in ${\mathbb R}^n$.
\end{theorem}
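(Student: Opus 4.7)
The plan is to recast the desired pointwise inequality as a one-sided bound on a single product and then run a maximum-principle argument. Setting $H := u^{q-1}v^{r+1}$, the inequality (\ref{eqMain1}) is equivalent to the pointwise lower bound $H \geq (r+1)/(q-1)$ on $\mathbb{R}^n$, which is what I would aim to establish.

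First, using the system (\ref{system}), a direct computation shows that $\Delta H$ decomposes as
\begin{equation*}
\Delta H \;=\; u^{-1}v^{r}\bigl[(q-1)H-(r+1)\bigr] + \mathcal{Q},
\end{equation*}
where
\begin{equation*}
\mathcal{Q} \;=\; u^{q-3}v^{r-1}\bigl[(q-1)(q-2)|v\nabla u|^2 + 2(q-1)(r+1)(v\nabla u)\cdot(u\nabla v) + r(r+1)|u\nabla v|^2\bigr]
\end{equation*}
is a quadratic form in the pair $(v\nabla u,\,u\nabla v)$. The key algebraic observation is that the critical relation $\nabla H = 0$ is equivalent to $(q-1)v\nabla u + (r+1)u\nabla v = 0$; substituting this into $\mathcal{Q}$ makes all three terms collapse to
\begin{equation*}
\mathcal{Q}\big|_{\nabla H=0} \;=\; -\,\frac{(q+r)(r+1)}{q-1}\, u^{q-1}v^{r-1}|\nabla v|^2 \;\leq\; 0 .
\end{equation*}

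With this decomposition in hand, the interior case is immediate. Were $H$ to attain its infimum at some $x_0 \in \mathbb{R}^n$ with $H(x_0) < (r+1)/(q-1)$, then $\nabla H(x_0)=0$ would force $\mathcal{Q}(x_0)\leq 0$ by the preceding identity; combined with the strict inequality $(q-1)H(x_0)-(r+1)<0$ and the positivity of $u,v$, this yields $\Delta H(x_0) < 0$, contradicting the second-order necessary condition $\Delta H(x_0)\geq 0$.

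The main obstacle is that the infimum of $H$ need not be attained on $\mathbb{R}^n$, since no growth or decay assumption on $(u,v)$ is imposed in the statement. Here I would invoke the new maximum-principle argument promised in the introduction (in the spirit of Cheng--Huang--Li~\cite{CHL15}): assuming by contradiction that $\inf_{\mathbb{R}^n} H < (r+1)/(q-1)$, construct a minimizing sequence $\{x_k\}$ along which $\nabla H(x_k)\to 0$ and $\liminf_{k}\Delta H(x_k)\geq 0$, and then push the interior computation to the limit. The delicate point is that the prefactor $u^{-1}v^r$ in front of $(q-1)H-(r+1)$ may degenerate if $u(x_k)\to\infty$ or $v(x_k)\to 0$, so that the usual Omori--Yau scheme alone may not suffice; a weighted refinement—for instance, applying the maximum principle to $H+\varepsilon f$ for a carefully chosen auxiliary weight $f\to\infty$ at infinity and then letting $\varepsilon\to 0$—should preserve the strict negativity of $\Delta H$ along the would-be minimizing sequence and thereby yield the required contradiction.
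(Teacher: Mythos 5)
Your interior computation is correct and is a genuinely different decomposition from the paper's: the identity
\[
\Delta H = u^{-1}v^{r}\bigl[(q-1)H-(r+1)\bigr]+\mathcal{Q},\qquad H=u^{q-1}v^{r+1},
\]
checks out, as does the collapse of $\mathcal{Q}$ to $-\tfrac{(q+r)(r+1)}{q-1}u^{q-1}v^{r-1}|\nabla v|^2\leqslant 0$ on the critical set $\nabla H=0$. If the infimum of $H$ were attained, your argument would finish the proof. The problem is that the entire content of the theorem (no growth assumption on $(u,v)$) lives in the non-compact case, and there your proof stops at ``a weighted refinement \dots should preserve the strict negativity.'' That is not a proof, and it is precisely where your choice of auxiliary function runs into trouble.

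Two concrete obstructions. First, the zeroth-order forcing term $u^{-1}v^{r}[(q-1)H-(r+1)]$ degenerates along a minimizing sequence on which $u\to\infty$ or $v\to 0$, so even if you could arrange $\nabla H\to 0$ and $\liminf \Delta H\geqslant 0$, the limit gives $0\geqslant 0$ and no contradiction; you would need a quantitative lower bound on $u^{-1}v^{r}$ that you do not have. Second, and more seriously, $\mathcal{Q}$ is an \emph{indefinite} quadratic form in $(v\nabla u,u\nabla v)$ away from the line $(q-1)v\nabla u+(r+1)u\nabla v=0$ (for instance $\mathcal{Q}=(q-1)(q-2)u^{q-3}v^{r+1}|\nabla u|^2>0$ wherever $\nabla v=0$ and $q>2$). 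Any perturbation scheme --- $H+\varepsilon f$, or multiplication by a cutoff --- replaces $\nabla H=0$ by $\nabla H=O(\text{error})$ at the extremal point, and absorbing the resulting off-diagonal terms of $\mathcal{Q}$ requires control of $|\nabla u|$, $|\nabla v|$ that is unavailable. The paper avoids both problems by choosing the \emph{difference}-type function $w=l u^{\sigma}-v$ with $\sigma=(1-q)/(r+1)$: the Hessian-type term there has a sign for free (since $\sigma(\sigma-1)>0$) and can simply be discarded, and at a positive maximum of the localized function one obtains $\Delta w\geqslant C\,w^{1+2/m}$ with $2/m=\tfrac{r+1}{q-1}+r-1>0$ and $C$ independent of $(u,v)$ --- a superlinear inequality in $w$ alone, which is exactly what the cutoff argument $CR^{-2}\geqslant M_R^{2/m}$ needs to close without any growth hypothesis. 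To complete your proof you would either have to supply the missing uniform estimates for $H$, or switch to an auxiliary function with this self-improving structure.
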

 
We stress that the pointwise inequality \eqref{eqMain1} holds for any $r>0$ and $q>1$. Moreover, no additional assumption on the growth of solutions is required.

The proofs of Theorems \ref{thmPointwise} and \ref{th2} follow the standard way which consists of two steps. The first step is to derive an appropriate differential inequality for an auxiliary function, then in the second step, we apply the maximum principle to prove that the auxiliary function has a sign. In a standard way, the maximum principle can only be applied when solutions have enough decay at infinity. However, we show that the decay of solutions is not necessary in the proof of pointwise inequality in some particular cases. 

In the last part of this paper, we would like to highlight our approach used in this paper by proving a poinwise inequality for parabolic problems. We provide here an application by considering the parabolic system of Lane--Emden type
\begin{equation}\label{parasystem}
\begin{cases}
u_t-\Delta u = v^r,\\
\,v_t-\Delta v= u^{p},
\end{cases}
\end{equation}
in ${\mathbb R}^n\times {\mathbb R} $, where $p, r>0$ and $pr>1$. Our result is as follows:
\begin{theorem}\label{th16}
	Let $(u,v)$ be a positive solution of system~\eqref{parasystem} in ${\mathbb R}^n\times {\mathbb R} $. Assume that $p\geqslant r>0$ and $pr>1$, then we have the  following pointwise inequality
	\begin{equation}\label{paraine}
	\frac{v^{r+1}}{r+1} \geqslant \frac{u^{p+1}}{p+1}
	\end{equation}
in ${\mathbb R}^n\times {\mathbb R} $.
\end{theorem}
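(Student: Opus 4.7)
The strategy parallels the classical Souplet argument for the elliptic Lane--Emden system and the elliptic comparison in Theorem~\ref{th2}. We introduce the auxiliary function
\[
F(x,t) := \frac{v(x,t)^{r+1}}{r+1} - \frac{u(x,t)^{p+1}}{p+1},
\]
and aim to show $F \geq 0$ on $\mathbb{R}^n \times \mathbb{R}$. A direct computation, in which the source terms $u^p v^r$ cancel, yields the clean identities
\[
(\partial_t - \Delta) F = p\, u^{p-1}|\nabla u|^2 - r\, v^{r-1}|\nabla v|^2, \qquad \nabla F = v^r \nabla v - u^p \nabla u.
\]

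We argue by contradiction. Suppose first that $F$ attains its infimum at some interior point $(x_0, t_0)$ with $F(x_0, t_0) < 0$. At such a point, $\nabla F(x_0, t_0) = 0$ gives $v^r \nabla v = u^p \nabla u$, and in particular $v^{2r}|\nabla v|^2 = u^{2p}|\nabla u|^2$. Substituting into the identity above,
\[
(\partial_t - \Delta) F(x_0, t_0) = u^{p-1}|\nabla u|^2 \Big( p - r\, \frac{u^{p+1}}{v^{r+1}}\Big).
\]
On the other hand, $(\partial_t - \Delta) F(x_0, t_0) \leq 0$ by the parabolic maximum principle at an interior minimum. Combining these with $F(x_0, t_0) < 0$, which is equivalent to $u^{p+1}/v^{r+1} > (p+1)/(r+1)$, together with the hypotheses $p \geq r$ and $pr > 1$, the plan is to extract a contradiction by a careful comparison between the two thresholds $r/p$ and $(r+1)/(p+1)$; the degenerate subcase $|\nabla u|(x_0,t_0) = 0$ (which also forces $|\nabla v|(x_0,t_0) = 0$ via the first-order identity) is treated by a secondary analysis that uses higher-order derivative information at the minimum together with the superlinearity condition $pr > 1$, exactly as in the proof of Theorem~\ref{th2}.

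The genuine difficulty is that $\mathbb{R}^n \times \mathbb{R}$ is doubly noncompact and the solutions $u, v$ are allowed to grow arbitrarily at infinity, so $F$ need not attain its infimum. To bypass this, the plan is to invoke the new maximum principle argument developed in the proofs of Theorems~\ref{thmPointwise} and~\ref{th2}, adapting the strategy originating in the work of Cheng, Huang, and Li. Concretely, assuming $\inf F < 0$, one introduces a perturbation $F_\varepsilon := F + \varepsilon\, \eta$, where $\eta(x, t)$ is a carefully chosen positive barrier growing at infinity in both $x$ and $t$, tailored so that $F_\varepsilon$ attains its infimum at a finite space-time point $(x_\varepsilon, t_\varepsilon)$ for every $\varepsilon > 0$. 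Applying the critical-point analysis above to $F_\varepsilon$ and then letting $\varepsilon \to 0$ yields $F \geq 0$. The hardest part will be the construction of $\eta$: it must grow fast enough to force the attainment of the infimum regardless of the possibly fast growth of $u$ and $v$, yet contribute perturbation terms in the first and second order conditions and in $(\partial_t - \Delta) F_\varepsilon$ whose effect vanishes in the limit $\varepsilon \to 0$. Fashioning such a barrier in the doubly noncompact parabolic setting is the essential ingredient of the new approach and is what distinguishes this argument from the classical Souplet proof.
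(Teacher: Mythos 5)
Your choice of auxiliary function is where the argument breaks down. With $F = v^{r+1}/(r+1) - u^{p+1}/(p+1)$ the source terms $u^pv^r$ cancel, and your identity $(\partial_t-\Delta)F = p\,u^{p-1}|\nabla u|^2 - r\,v^{r-1}|\nabla v|^2$ is correct, but the critical-point analysis it feeds is inconclusive. At a negative interior minimum, $\nabla F=0$ gives $v^{2r}|\nabla v|^2=u^{2p}|\nabla u|^2$, and $(\partial_t-\Delta)F\leqslant 0$ then yields (when $\nabla u\neq 0$) $u^{p+1}/v^{r+1}\geqslant p/r$, while $F<0$ yields $u^{p+1}/v^{r+1}>(p+1)/(r+1)$. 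Since $p\geqslant r$ implies $p/r\geqslant (p+1)/(r+1)$, these two conditions point in the \emph{same} direction and are perfectly compatible; there is no contradiction to be extracted from comparing the two thresholds. Worse, in the degenerate subcase $\nabla u=0$ the second-order condition is vacuous, and the promised ``higher-order information'' is never supplied (nor does the proof of Theorem~\ref{th2} contain such a step --- it never needs one). The paper avoids all of this by taking $w=u-lv^\sigma$ with $\sigma=(r+1)/(p+1)\in(0,1]$ and $l=\sigma^{-1/(p+1)}$: concavity of $s\mapsto s^\sigma$ discards the gradient terms with the correct sign and leaves $\Delta w\geqslant w_t + l\sigma v^{\sigma-1}\bigl(u^p-(lv^\sigma)^p\bigr)$, whose zeroth-order term is strictly positive wherever $w>0$. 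That sign-definite zeroth-order term is exactly what your cancellation destroys.

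The non-attainment step is also not viable as sketched. A perturbation $F_\varepsilon=F+\varepsilon\eta$ with a fixed barrier $\eta$ cannot force attainment ``regardless of the possibly fast growth of $u$ and $v$'': no fixed $\eta$ dominates arbitrary growth, and even if it did, the limiting critical-point analysis would still be the inconclusive one above. The paper needs no growth hypothesis in Theorem~\ref{th16} because its argument is of a different nature: it maximizes $\varphi_R w$ for a rescaled cutoff $\varphi_R=\psi^m(x/R,t/R^2)$, and the key point is that the zeroth-order term in the differential inequality is \emph{superlinear in $w$ itself} (lower bounds of the form $Cw^p$ or $Cw^{1+\varepsilon}$, obtained from elementary convexity inequalities after splitting into the cases where $v(x_R,t_R)$ is bounded or unbounded, the latter using $pr>1$). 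Choosing $m$ so that $2/m$ matches the excess exponent gives $CR^{-2}\geqslant M_R^{2/m}\to M^{2/m}>0$, a contradiction as $R\to\infty$. It is this superlinearity in $w$, not a barrier, that defeats the noncompactness.
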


We note that the Liouville-type theorem for system~\eqref{parasystem} is conjectured to hold  with $(p,r)$ in the  range \eqref{hyperbola} and $pr>1$. However, it is still an  open question, even in dimension $n=1$ or in the class of radial solutions, we refer to papers \cite{EH91, AHV97,  Zaa01} for some partial results. It is expected that the pointwise inequality \eqref{paraine} is an important step to tackle the Liouville-type for the system~\eqref{parasystem}. 
 
The rest of the paper is organized as follows. Sections 2, 3, and 4 are devoted to the proofs of Theorems \ref{thmPointwise}, \ref{th2}, and \ref{th16}, respectively.

\tableofcontents
 

\section{Pointwise inequality for the biharmonic equation (\ref{eqMain}): Proof of Theorem~\ref{thmPointwise}}
\label{sec-DevelopingIteration}

For simplicity, we denote by $p = (q-1)/2$, $A=u^{-1}|\nabla u|^2$, and $B=u^{-p}$. We define the functions $w$ by 
\begin{equation}\label{def:w}
w = -\Delta u + \alpha A +\beta B,
\end{equation}
where $\alpha$ and $\beta$ are positive constants to be chosen later. The key step in the proof of Theorem~\ref{thmPointwise} is the following lemma.

\begin{lemma}\label{lem1}
Let $u$ be a smooth positive solution of the equation \eqref{eqMain} in ${\mathbb R}^n$. Then the function $w$ defined in \eqref{def:w} satisfies in ${\mathbb R}^n$ the differential inequality
\begin{align}\label{keyinequality}
u\Delta{w} \geqslant 
-2\alpha\nabla u \cdot \nabla w+\frac{2\alpha}{n}w^2+ K_1\alpha Aw+K_2\beta Bw+ I_1\alpha A^2+I_2B^2+I_3\beta AB,
\end{align}
where
\begin{equation}\label{eqI_iK_j}
\left\{
\begin{split}
&I_1:=\dfrac{2}{n}(1-2\alpha)^2-2\alpha^2+\alpha,\\
&I_2:=1+\dfrac{2}{n}\alpha\beta^2-\dfrac{q-1}{2}\beta^2,\\
&I_3:=\dfrac{q-1}{2}\Big(\dfrac{q+1}{2}-\alpha\Big)- \alpha \Big(q-\dfrac{8\alpha}{n}+\dfrac{4}{n}\Big),\\
&K_1:=1+\dfrac{4(1-2\alpha)}{n},\\
&K_2:=\dfrac{q-1}{2}-\dfrac{4\alpha}{n}.
\end{split}
\right.
\end{equation}
\end{lemma}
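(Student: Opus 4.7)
The plan is to derive \eqref{keyinequality} by computing $u \Delta w = -u \Delta^2 u + \alpha u \Delta A + \beta u \Delta B$ term by term, using the equation $\Delta^2 u = -u^{-q}$ to eliminate the fourth-order term, Bochner's identity $\Delta |\nabla u|^2 = 2|D^2 u|^2 + 2 \nabla u \cdot \nabla \Delta u$ to expand $\Delta A$, and the chain rule for $\Delta u^{-p}$. A refined Cauchy--Schwarz bound on $|D^2 u|^2$ will then be invoked to produce the $(2\alpha/n) w^2$ contribution and, simultaneously, to control the residual Hessian--gradient term $u^{-1} D^2 u(\nabla u, \nabla u)$.

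First I would establish the basic identities
\[
u\Delta A = 2|D^2 u|^2 + 2 \nabla u \cdot \nabla \Delta u - 4 u^{-1} D^2 u(\nabla u, \nabla u) - A \Delta u + 2 A^2
\]
and $u\Delta B = - p B \Delta u + p(p+1) AB$ with $p = (q-1)/2$. The biharmonic equation gives $-u \Delta^2 u = u^{1-q} = B^2$, which supplies the ``$1$'' in $I_2$. Next, the relations $\nabla \Delta u = -\nabla w + \alpha \nabla A + \beta \nabla B$ and $\Delta u = -w + \alpha A + \beta B$, together with $\nabla u \cdot \nabla B = -p A B$ and the identity $\nabla u \cdot \nabla A = 2 u^{-1} D^2 u(\nabla u, \nabla u) - A^2$, allow me to eliminate every gradient product except the target $\nabla u \cdot \nabla w$. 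After assembling, $u \Delta w$ takes the shape
\[
u \Delta w = 2\alpha |D^2 u|^2 - 2\alpha \nabla u \cdot \nabla w - 4\alpha(1-\alpha)\, u^{-1} D^2 u(\nabla u, \nabla u) + R(w, A, B),
\]
where $R(w,A,B)$ is a polynomial in $w$, $A$, $B$ whose coefficients are computed by bookkeeping.

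The essential step is then to dispose of the residual $-4\alpha(1-\alpha)\, u^{-1} D^2 u(\nabla u, \nabla u) = -4\alpha(1-\alpha) HA$, where $H := D^2 u(\nabla u, \nabla u)/|\nabla u|^2$, using $2\alpha|D^2 u|^2$ as reserve. Writing $D^2 u = (\Delta u / n) I + N$ with $N$ traceless, the scalar inequality $|N|^2 \geqslant \tfrac{n}{n-1}(N(e,e))^2$ (proved by choosing an orthonormal basis with first vector $e$ and applying Cauchy--Schwarz to the diagonal entries, whose sum is zero) yields the refined bound
\[
|D^2 u|^2 \geqslant \frac{(\Delta u)^2}{n} + \frac{n}{n-1}\Bigl(H - \frac{\Delta u}{n}\Bigr)^{\!2}.
\]
Completing the square in $H$ inside $2\alpha|D^2 u|^2 - 4\alpha(1-\alpha)HA$ and discarding the resulting non-negative perfect square then produces
\[
2\alpha |D^2 u|^2 - 4\alpha(1-\alpha) H A \geqslant \frac{2\alpha (\Delta u)^2}{n} - \frac{4\alpha(1-\alpha)}{n} A \Delta u - \frac{2\alpha(1-\alpha)^2 (n-1)}{n} A^2.
\]

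Finally, I would expand $(\Delta u)^2 = (w - \alpha A - \beta B)^2$ and $A\Delta u = -Aw + \alpha A^2 + \beta AB$, substitute back, and collect the coefficients of $w^2$, $Aw$, $Bw$, $A^2$, $AB$, $B^2$. By direct algebra these should match $\tfrac{2\alpha}{n}$, $\alpha K_1$, $\beta K_2$, $\alpha I_1$, $\beta I_3$, $I_2$ exactly, yielding \eqref{keyinequality}. The main obstacle is not conceptual but computational: several independent sources contribute to each coefficient, and the only genuinely non-trivial input is the refined Cauchy--Schwarz above. Using the crude bound $|D^2 u|^2 \geqslant (\Delta u)^2/n$ alone leaves an extra $-\tfrac{2\alpha(1-\alpha)^2}{n} A^2$ slack that spoils the $(1-2\alpha)^2$ factor in $I_1$; only the refined inequality combined with completing the square in $H$ produces this factor cleanly, and the $4\alpha/n$ shifts in $K_1, K_2$ and the $8\alpha/n$ correction in $I_3$ are traced directly to the expansion of $(\Delta u)^2$ and $A\Delta u$ in the final step.
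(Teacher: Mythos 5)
Your proposal is correct, and its overall skeleton (compute $\Delta B$ exactly, expand $\Delta A$ via Bochner, use the equation to replace $\Delta^2 u$ by $-u^{-q}$ so that $-u\Delta^2u=B^2$, substitute $\Delta u=-w+\alpha A+\beta B$ and $\nabla\Delta u=-\nabla w+\alpha\nabla A+\beta\nabla B$, then collect coefficients) is the same as the paper's. The one step you handle differently is the Hessian term. The paper absorbs the residual $-4\alpha(1-\alpha)u^{-1}\nabla^2u(\nabla u)\cdot\nabla u$ by completing a square at the matrix level, writing the relevant terms as $2\alpha\left\|\nabla^2 u+(\alpha-1)u^{-1}\nabla u\otimes\nabla u\right\|^2-2\alpha^2(\alpha-1)A^2$ and then applying only the plain trace inequality $\|M\|^2\geqslant n^{-1}(\operatorname{Tr}M)^2$; you instead split $\nabla^2u$ into its trace part and a traceless part $N$, invoke the sharper bound $\|N\|^2\geqslant\tfrac{n}{n-1}N(e,e)^2$, and complete a scalar square in $H$. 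I checked that the two routes produce the identical quadratic lower bound: in both cases the Hessian block is minorized by
\[
\frac{2\alpha}{n}(\Delta u)^2-\frac{4\alpha(1-\alpha)}{n}A\,\Delta u+2\alpha(1-\alpha)\,\frac{1+\alpha(n-1)}{n}A^2
\]
once the leftover $+2\alpha(1-\alpha)A^2$ from $\Delta(u^{-1})$ is included, so your coefficient bookkeeping does land exactly on $I_1,I_2,I_3,K_1,K_2$. The paper's version is slightly more economical (one matrix completion, one crude trace inequality), whereas yours makes explicit that no sharpness is lost in treating the direction $\nabla u/|\nabla u|$ separately; your closing remark that the crude bound $\|\nabla^2u\|^2\geqslant(\Delta u)^2/n$ ``alone'' loses the $(1-2\alpha)^2$ factor is the only imprecise point --- by itself that bound gives no control of $H$ at all, so the real content in either approach is some completion of squares coupling $H$ (or the rank-one shift) to $A$ --- but this is commentary and does not affect the validity of the argument.
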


\begin{proof}
We write the Laplacian of $w$ as follows 
\begin{equation}\label{IJ}
\Delta w = -\Delta^2 u + \alpha \Delta(A) + \beta \Delta (B) = u^{-q} +\alpha \Delta(A) + \beta \Delta (B).
\end{equation}
We shall compute $\Delta (B)$ and find a lower bound of $\Delta (A)$. First, for the term $\Delta (B)$, a straightforward computation shows that
\begin{equation*}
\begin{split}
 \Delta (B)&= -pu^{-p-1} \Delta u + p(p+1) u^{-p-2} |\nabla u|^2\\
& = - p u^{-p-1}(-w + \alpha u^{-1} |\nabla u|^2 + \beta u^{-p})+p(p+1) u^{-p-2} |\nabla u|^2 .
\end{split}
\end{equation*}
Hence, 
\begin{equation}\label{kqJ}
u \Delta (B)=pBw + p(p+1-\alpha) AB - p\beta B^2.
\end{equation}
For the term $ \Delta (A)$, we have
\begin{align*}
\Delta (A)& = \frac{\Delta |\nabla u|^2}u -2\frac{\nabla (|\nabla u|^2)\cdot \nabla u}{u^2} + |\nabla u|^2 \Delta (u^{-1})\\
&= 2 \frac{\|\nabla^2 u\|^2}{u} + 2 \frac{\nabla u\cdot \nabla \Delta u} u - 4\frac{\nabla^2 u(\nabla u) \cdot \nabla u}{u^2} +|\nabla u|^2 \big(2u^{-3} |\nabla u|^2- u^{-2} \Delta u \big),
\end{align*}
where $\nabla^2 u$ denotes the Hessian matrix of $u$ and $\|\cdot\|$ denotes the Hilbert--Schmidt norm on matrices defined to be
\[
\| M \| = \big( \sum_{i,j} |m_{ij}|^2 \big)^{1/2},
\]
where the matrix $M$ is given as follows $M =(m_{ij})$. Using $\Delta u=-w+\alpha A+\beta B$, we have
\begin{align*}
\nabla u\cdot \nabla \Delta u =& \nabla u\cdot \nabla(-w + \alpha u^{-1} |\nabla u|^2 + \beta u^{-p})\\
=&- \nabla u \cdot \nabla w + 2\alpha u^{-1} \nabla^2 u(\nabla u) \cdot \nabla u -\alpha u^{-2} |\nabla u|^4 -p\beta u^{-p-1} |\nabla u|^2.
\end{align*}
Therefore, 
\begin{align*}
u\Delta (A)=&- 2\nabla u \cdot \nabla w + 2 \left\| \nabla^2 u + (\alpha-1) \frac{\nabla u\otimes \nabla u}u\right\|^2 -2\alpha(\alpha-1) u^{-2} |\nabla u|^4\\
& -2p\beta u^{-p-1} |\nabla u|^2 -|\nabla u|^2 u^{-1} \Delta u\\
=&- 2\nabla u \cdot \nabla w+ 2 \left\| \nabla^2 u + (\alpha-1) \frac{\nabla u\otimes \nabla u}u\right\|^2 -2\alpha(\alpha-1) A^2 \\
& -2p\beta AB-A \Delta u.
\end{align*}
For any $n\times n$ matrix $M$, it is elementary that $\|M\|^2 = {\rm Tr}(M^t M) \geqslant n^{-1} ({\rm Tr} (M))^2$, hence
\begin{align*}
 \left\| \nabla^2 u + (\alpha-1) u^{-1} \nabla u\otimes \nabla u \right\|^2 &\geqslant n^{-1} \big(\Delta u + (\alpha -1) u^{-1} |\nabla u|^2 \big)^2\\
 &=n^{-1} \big(-\Delta u + (1-\alpha) A \big)^2,
\end{align*}
which implies that
\begin{equation*}
u\Delta (A) \geqslant - 2 \nabla u \cdot \nabla w+ \frac{2}{n} \big( -\Delta{u} +(1-\alpha)A \big)^2-2\alpha(\alpha-1)A^2 
 -2p\beta AB +A (-\Delta u).
\end{equation*} 
Replacing $-\Delta u$ by $w-\alpha A -\beta B$ in the preceding estimate, we deduce that 
\begin{equation*}
\begin{split}
u\Delta (A)\geqslant &- 2 \nabla u \cdot \nabla w
 +\frac{2}{n}w^2+ \frac{4}{n}\Big((1-2\alpha)A-\beta B)\Big)w +\frac{2}{n}\Big((1-2\alpha)A-\beta B)\Big)^2\\
 &-2\alpha(\alpha-1)A^2 
 -2p\beta AB +A w- A(\alpha A +\beta B).
\end{split}
\end{equation*}
From this we obtain
\begin{equation}\label{kqI}
\begin{split}
u\Delta (A)\geqslant &- 2 \nabla u \cdot \nabla w
 + \frac{2}{n}w^2+\frac{4}{n}\Big((1-2\alpha)A-\beta B\Big)w+A w \\
 &+\Big(\frac{2}{n}(1-2\alpha)^2-2\alpha^2+\alpha\Big)A^2 \\
& -\Big(1+2p+\frac{4}{n}(1-2\alpha)\Big)\beta AB+\frac{2}{n}\beta^2 B^2.
\end{split}
\end{equation}
Combining \eqref{IJ}, \eqref{kqJ}, and \eqref{kqI} we arrive at
\begin{align*}
u\Delta{w} \geqslant & 
B^2- 2\alpha\nabla u \cdot \nabla w+ \frac{2\alpha}{n}w^2+ \frac{4\alpha}{n}\Big((1-2\alpha)A-\beta B)\Big)w+\alpha A w \\
 &+\alpha\Big(\frac{2}{n}(1-2\alpha)^2-2\alpha^2+\alpha\Big)A^2 
 -\Big(1+2p+\frac{4}{n}(1-2\alpha)\Big)\alpha\beta AB+\frac{2}{n}\alpha\beta^2 B^2\\
 &+p\beta Bw + p(p+1-\alpha)\beta AB - p\beta^2 B^2.
\end{align*}
By a simple computation, we get
\begin{equation}\label{est1}
\begin{split}
u\Delta{w} \geqslant &
- 2\alpha\nabla u \cdot \nabla w+\frac{2\alpha}{n}w^2+ \Big(\frac{4\alpha(1-2\alpha)}{n}+\alpha\Big)Aw+(p-\frac{4\alpha}{n})\beta Bw \\
 &+\alpha\Big(\frac{2}{n}(1-2\alpha)^2-2\alpha^2+\alpha\Big)A^2+\Big(1+\frac{2}{n}\alpha\beta^2-p\beta^2\Big)B^2 \\
 &+\Big(p(p+1-\alpha)- \alpha(1+2p-\frac{8\alpha}{n}+\frac{4}{n})\Big)\beta AB ;
\end{split}
\end{equation}
hence the lemma follows. 
\end{proof}

To make use of the maximum principle, we are forced to require that $I_1, I_2, I_3\geqslant 0$ and that $K_1, K_2>0$. We will eventually make these conditions precise. First, by requiring $I_1, I_2, I_3\geqslant 0$ (and taking into account $K_1, K_2>0$ eventually), the following lemma is a direct consequence of Lemma~\ref{lem1}.

\begin{lemma}\label{iterationwk}
	Let $u$ be a smooth positive solution of the equation \eqref{eqMain} in ${\mathbb R}^n$. 
	Assume \eqref{conditionI}, then the function $w$ defined in \eqref{def:w} satisfies in ${\mathbb R}^n$ the differential inequality
	\begin{align}\label{keyestimate}
	u\Delta{w} \geqslant 
	-2\alpha\nabla u \cdot \nabla w+\frac{2\alpha}{n}w^2+ K_1\alpha Aw+K_2\beta Bw,
	\end{align}
	where $K_1, K_2$ are defined in \eqref{eqI_iK_j}.
\end{lemma}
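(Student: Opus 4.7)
The plan is to obtain the lemma directly from Lemma~\ref{lem1} by showing that the three additional terms $I_1\alpha A^2 + I_2 B^2 + I_3\beta AB$ on the right-hand side of \eqref{keyinequality} are all non-negative under the three inequalities in \eqref{conditionI}. Since $A=u^{-1}|\nabla u|^2\geqslant 0$, $B=u^{-p}>0$ and $\alpha,\beta>0$, dropping those non-negative terms then yields \eqref{keyestimate}. Thus the whole proof reduces to verifying $I_1,I_2,I_3\geqslant 0$ (and also $K_1,K_2>0$, which is what gives a usable sign to the $K_1\alpha Aw$ and $K_2\beta Bw$ terms for the later use of the maximum principle).

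I would check the three $I_i$'s in turn. The factorization
\[
I_1=\frac{2}{n}(1-2\alpha)^2+\alpha(1-2\alpha)=(1-2\alpha)\left(\frac{2(1-2\alpha)}{n}+\alpha\right)
\]
shows that the first condition $\alpha\leqslant 1/2$ forces $I_1\geqslant 0$. Rewriting
\[
I_2=1-\frac{1}{2}\left(q-1-\frac{4\alpha}{n}\right)\beta^2,
\]
one sees that $I_2\geqslant 0$ is exactly the second condition $\beta\leqslant\sqrt{2/(q-1-4\alpha/n)}$ (which also implicitly requires $q-1-4\alpha/n>0$, so that the square root is defined).

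The only nontrivial step is $I_3\geqslant 0$. Expanding the definition in \eqref{eqI_iK_j} gives
\[
4I_3=q^2-6\alpha q+2\alpha-1+\frac{32\alpha^2}{n}-\frac{16\alpha}{n},
\]
and I would then use the identity $-(2\alpha-1+32\alpha^2/n-16\alpha/n)=(1-2\alpha)(1+16\alpha/n)$ to apply the quadratic formula and conclude that $I_3\geqslant 0$ is equivalent to $q\geqslant 3\alpha+\sqrt{9\alpha^2+(1-2\alpha)(1+16\alpha/n)}$, precisely the third condition in \eqref{conditionI}. The positivity of $K_1$ and $K_2$ is then quick: $K_1=1+4(1-2\alpha)/n\geqslant 1$ since $\alpha\leqslant 1/2$, while $K_2=(q-1)/2-4\alpha/n>0$ can be read off from the third condition by a short algebraic check valid for $n\geqslant 3$. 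No serious obstacle is expected, since \eqref{conditionI} has been designed precisely to encode these three sign conditions.
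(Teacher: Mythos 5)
Your proposal is correct and follows exactly the route the paper intends: Lemma~\ref{iterationwk} is stated there as "a direct consequence of Lemma~\ref{lem1}" obtained by checking that \eqref{conditionI} forces $I_1,I_2,I_3\geqslant 0$ and dropping the corresponding non-negative terms, which is precisely what you do (and your factorizations of $I_1$, $I_2$ and the quadratic-in-$q$ analysis of $I_3$ are all accurate). Your added verification that $K_1\geqslant 1$ and $K_2>0$ matches the paper's parenthetical remark and is indeed only needed for the subsequent maximum-principle step, not for the inequality \eqref{keyestimate} itself.
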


With Lemma~\ref{iterationwk} in hand, we are in position to prove Theorem~\ref{thmPointwise}.

\begin{proof}[Proof of Theorem~\ref{thmPointwise} completed] 
 We first note that the assumption \eqref{conditionI}  in Theorem~\ref{thmPointwise}  guarantees $I_1, I_2, I_3\geqslant 0$. 
	Let $\gamma\in [0,1)$ verifying \eqref{ConditionsForH2}. Let  $w_\gamma :=u^{-\gamma}w$, we have the following
\begin{align}\label{grad}
\nabla w=u^{\gamma}\nabla w_{\gamma}+\gamma u^{\gamma-1}w_\gamma\nabla u. 
\end{align}
Using Lemma \ref{iterationwk}, we compute lower estimate of $\Delta w_{\gamma}$ as follows.
\begin{align*}
\Delta w_\gamma&= u^{-\gamma}\Delta w +2\nabla(u^{-\gamma})\cdot\nabla w+ \Delta(u^{-\gamma})w\\
&\geqslant u^{-\gamma-1}\Big(-2\alpha\nabla u \cdot \nabla w+\frac{2\alpha}{n}w^2+ K_1\alpha Aw+K_2\beta Bw\Big) \\
&\quad +2\nabla(u^{-\gamma})\cdot\nabla w+ \Delta(u^{-\gamma})w\\
&=u^{-\gamma-1}\Big(-2(\alpha+\gamma) \nabla u \cdot \nabla w+ \frac{2\alpha}{n}w^2+ K_1\alpha Aw+K_2\beta Bw\Big) \\
&\quad + \big(-\gamma u^{-\gamma-1}\Delta u +\gamma(\gamma+1)u^{-\gamma-1}A\big)w.
\end{align*}
Substituting $\nabla w=u^{\gamma}\nabla w_{\gamma}+\gamma u^{\gamma-1}w_\gamma\nabla u$ and $-\Delta u=w-\alpha A-\beta B$, we get
\begin{align*}
\Delta w_\gamma\geqslant &-2(\alpha+\gamma)u^{-\gamma-1} \nabla u \cdot (u^{\gamma}\nabla w_{\gamma}+\gamma u^{\gamma-1}w_\gamma\nabla u)+ \frac{2\alpha}{n}u^{-\gamma-1}w^2 \\
& + u^{-\gamma-1}w\Big(K_1\alpha A+K_2\beta B + \gamma (w-\alpha A-\beta B) +\gamma(\gamma+1)A\Big).
\end{align*}  
Taking into account $w=u^{\gamma}w_\gamma$ we obtain
\begin{align}
u^{1-\gamma}\Delta w_\gamma\geqslant J_1w^2_\gamma  +u^{-\gamma} (-2J_2 \nabla u \cdot \nabla w_{\gamma} + L_1Aw_\gamma+L_2 Bw_\gamma) ,\label{28replace}
\end{align} 
where 
\begin{equation*}
\left\{
\begin{split}
J_1&:=\frac{2\alpha}{n}+\gamma,\\
J_2&:=\alpha+\gamma,\\
L_1&:=K_1\alpha-3\gamma\alpha-\gamma^2+\gamma,\\
L_2&:=(K_2-\gamma)\beta.
\end{split}
\right.
\end{equation*}
It is noted that the condition \eqref{ConditionsForH2} guarantees $L_1, L_2>0$. We shall show that $w_\gamma\leqslant 0$ by way of contradiction. Suppose that
\[
M=\sup_{{\mathbb R}^n} w_\gamma>0.
\] 
(Clearly $M\leqslant +\infty$.) We have the following two possible cases:

\medskip\noindent\textbf{Case 1}. {\it Suppose that $w_\gamma$ has the global maximum}. In this scenario, there exists some $x_0 \in {\mathbb R}^n$ such that
\[
w_\gamma(x_0)=M=\max\limits_{{\mathbb R}^n} w_\gamma>0.
\] 
Clearly at $x_0$, we have that $\nabla w_\gamma(x_0)=0$ and that $\Delta w_\gamma(x_0)\leqslant 0$. The inequality~\eqref{28replace} at $x=x_0$ gives
\begin{align*}
0 \geqslant 
J_1M^2,
\end{align*}
which is impossible.

\medskip\noindent\textbf{Case 2}. {\it Suppose that the supremum of $w_\gamma$ is attained at infinity}. 
Let $\psi : {\mathbb R}^N \to [0,1]$ be a smooth cuff-off function in ${\mathbb R}^N$ satisfying
\[\left\{
\begin{split}
\psi&=1 \quad \text{ in } B_{1/2} \text{ and }\\
\psi&=0 \quad \text{ if }|x|>1.
\end{split}
\right.\]
Let also $\varphi=\psi^2$. Then for some uniform constant $C>0$ we have the upper bounds
\begin{equation}\label{bound}
|\Delta \varphi|\leqslant C, \;\; \varphi^{-1}|\nabla \varphi|^2\leqslant C.
\end{equation}
For $R>1$, let $\varphi_R(x)=\varphi (x/R)$ and
$$w_{\gamma,R}=\varphi_R\, w_\gamma.$$
 Noting that $w_{\gamma,R}$ is zero if $|x|>R$, then there exists $x_R\in B_R$ such that
$$M_R:=\max\limits_{{\mathbb R}^n}w_{\gamma,R}= w_{\gamma,R}(x_R).$$ 
The choice of $\varphi$ implies that $M_R\to M$ as $R\to \infty$. 
In what follows, we shall derive a contradiction by passing $R\to \infty$. Since $M>0$, we may assume without loss of generality that $M_R>0$ for all $R>1$. The property of local maximum gives
\[
\left\{
\begin{split}
\nabla w_\gamma=& -\frac{\nabla \varphi_R}{\varphi_R}w_\gamma,\\
0\geqslant &\Delta w_{\gamma,R} \qquad \text{ at } x=x_R.
\end{split}
\right.
\]
Hence, at $x=x_R$, we have 
\[\begin{split}
0\geqslant &\varphi_R \Delta w_\gamma +2\nabla \varphi_R\cdot\nabla w_\gamma+ \Delta\varphi_Rw_\gamma\\
=&\varphi_R\; \Delta w_\gamma-2\varphi_R^{-1}|\nabla \varphi_R|^2 w_\gamma+ \Delta\varphi_R w_\gamma.
\end{split}\]
This combined with \eqref{bound} yields
\begin{equation}
CR^{-2}w_\gamma\geqslant \varphi_R\;\Delta w_\gamma
\end{equation}
at $x=x_R$. Next, combining this with inequality~\eqref{28replace}, at $x=x_R$ we have
\begin{align*}
Cu^{1-\gamma} R^{-2}w_\gamma &\geqslant J_1\varphi_Rw^2_\gamma +\varphi_Ru^{-\gamma} (-2J_2 \nabla u \cdot \nabla w_{\gamma} + L_1Aw_\gamma+L_2 Bw_\gamma)\\
&=J_1\varphi_Rw^2_\gamma +\varphi_Ru^{-\gamma} (2J_2 \varphi_R^{-1}\nabla u \cdot \nabla\varphi_R + L_1A+L_2 B)\, w_{\gamma}\\
&\geqslant J_1\varphi_Rw^2_\gamma+  \varphi_Ru^{-\gamma}\big(-\varepsilon^{-1}J_2^2 \varphi_R^{-2}|\nabla \varphi_R|^2u+ (L_1-\varepsilon) A+L_2 B \big)w_\gamma,
\end{align*}
where we have used the Cauchy--Schwarz inequality
\[
2J_2\varphi_R^{-1}\nabla u \cdot \nabla\varphi_R \geqslant -\varepsilon^{-1}J_2^{2} \varphi_R^{-2}|\nabla \varphi_R|^2u-\varepsilon  A 
\]
at $x=x_R$ once to get the latter estimate. Consequently,
\begin{align}\label{212bis}
C(\varepsilon)u^{1-\gamma} R^{-2}w_\gamma &\geqslant J_1\varphi_Rw^2_\gamma+  \varphi_Ru^{-\gamma}\big( (L_1-\varepsilon) A+L_2 B \big)w_\gamma
\end{align}
at $x=x_R$. Where $C(\varepsilon)$ is a postive constant depending on $\varepsilon$ but it does not depend on $R$. Choosing $\varepsilon>0$ small such that $L_1>\varepsilon$, keeping only the first term of the right hand side of \eqref{212bis}, we have at $x=x_R$
\begin{align}\label{endest}
C(\varepsilon) u^{1-\gamma} R^{-2}\geqslant J_1 \varphi_R w_\gamma=J_1M_R\to J_1M
\end{align}
as $R\to \infty$.
From the assumption \eqref{hypo} that  $u(x)=o\big( |x|^{2/(1-\gamma)} \big)$, we have a contradiction in \eqref{endest} when $R$ is sufficiently large.  Hence,  $ w_\gamma\leq 0$ and \eqref{eqPointwiseInequality} follows. 

 Finally, to treat the last part of the theorem, we first deduce from $q>1$ and the third inequality of \eqref{conditionI} that 
 \begin{equation}\label{condiK1}
 q-1-\frac{8\alpha}{n}>0.
 \end{equation}
 Next, the first inequality of \eqref{conditionI} guarantees
\begin{equation}\label{condiK2}
\alpha+\frac{4\alpha(1-2\alpha)}{n}>0.
\end{equation} 
It follows from \eqref{condiK1} and  \eqref{condiK2} that \eqref{ConditionsForH2} holds for sufficiently small $\gamma>0$. In this setting, the hypothesis \eqref{hypo} immediately implies that $u(x) = O(|x|^2)$. Theorem is proved.
\end{proof}


\section{Pointwise inequality for the system (\ref{system}): Proof of Theorem~\ref{th2}}

The section is devoted to a proof of Theorem~\ref{th2}. For simplicity, we denote
\[
w = l u^\sigma-v,
\]
where $\sigma = (1-q)/(r+1)$ and $l = (-\sigma)^{-1/(r+1)}$. Hence to conclude the theorem, it suffices to establish $w \leqslant 0$. Note that $\sigma <0$, a direct calculation leads us to
\[\begin{split}
\Delta w =& l\sigma u^{\sigma-1}\Delta u + l \sigma(\sigma-1)|\nabla u|^2-\Delta v\\
 \geqslant & l\sigma u^{\sigma-1}\Delta u -\Delta v\\
= & l\sigma u^{\sigma-1}v^r +u^{-q}\\
=&-l\sigma u^{\sigma-1}(l^ru^{\sigma r}-v^r).
\end{split}
\]
Replacing $u^{\sigma}= l^{-1}(w+v)$ we have, 
\begin{equation*}
\Delta w\geqslant -l\sigma \big(l^{-1}(w+v)\big)^{(\sigma-1)/\sigma}\big( (w+v)^r-v^r\big)
\end{equation*}
Or we can write
\begin{equation}\label{deltaw}
\Delta w\geqslant C(w+v)^{(\sigma-1)/\sigma}\big( (w+v)^r-v^r\big)
\end{equation}
We now prove $w \leqslant 0$ by way of contradiction. Suppose that 
\[
M=\sup_{{\mathbb R}^n} w(x) > 0 .
\] 
(Clearly $M\leqslant +\infty$.) There are two possible cases:

\medskip\noindent\textbf{Case 1}. If $x_0$ is a global maximum point of $w$ in ${\mathbb R}^n$, that is $w(x_0) = \sup_{x \in {\mathbb R}^n} w(x)$ with $\Delta w(x_0) \leqslant 0$. Clearly this cannot be the case since $w(x_0) > 0$ implies  from \eqref{deltaw} that $\Delta w(x_0) > 0$.

\medskip\noindent\textbf{Case 2}. The supremum of $w$ is attained at infinity. 
Let $\psi: {\mathbb R}^N \to [0,1]$ be a smooth cuff-off function in ${\mathbb R}^N$ such that $\psi=1$ in $B_{1/2}$ and $\psi=0$ if $|x|>1$. Let also $\varphi=\psi^m$ for some $m>0$ to be chosen later. Then as always, there is some uniform constant $C>0$ such that
\begin{equation}\label{bound1x}
|\Delta \varphi|\leqslant C\varphi^{1-2/m}, \quad  \varphi^{-1}|\nabla \varphi|^2\leqslant C\varphi^{1-2/m}.
\end{equation}
For each $R>1$ we set
\[ 
\varphi_R(x)=\varphi(x/R), \quad
w_R = \varphi_R w.
\]
Noting that $w_R$ is zero if $|x|>R$, then there exists $x_R\in B_R$ such that
$$M_R:=\max\limits_{{\mathbb R}^n} w_R= w_R(x_R).$$ 
The choice of $\varphi$ implies that $M_R\to M$ as $R\to \infty$.  This implies that $M_R>0$ for $R$ large. We may assume that $M_R>0$ for all $R>1$. Next, the property of local maximum gives
\[
\left\{
\begin{split}
\nabla w=&-\frac{\nabla \varphi_R}{\varphi_R}\,w,\\
0\geqslant & \Delta w_R \qquad \text{ at } x=x_R.
\end{split}
\right.
\]
Hence, at $x=x_R$, we have 
\[
\begin{split}
0\geqslant &\varphi_R\,\Delta w +2\nabla \varphi_R\cdot\nabla w+\Delta\varphi_R\,w\\
 =&\varphi_R\; \Delta w-2\varphi_R^{-1}|\nabla \varphi_R|^2 w+ \Delta\varphi_R w.
\end{split}
\]
 This combined with \eqref{bound1x} yields
\begin{equation}\label{sds}
 CR^{-2}\varphi_R^{1-2/m}w\geqslant \varphi_R\;\Delta w
\end{equation}
at $x=x_R$. We now consider two cases of $r$. If $r\geqslant 1$ then 
using the inequality $(a+b)^r- a^r\geqslant b^r$ for $a,b\geqslant 0$, it follows from \eqref{deltaw} and \eqref{sds} that, at $x=x_R$ 
\begin{align*}
 CR^{-2}\varphi_R^{1-2/m}w&\geqslant \varphi_R\;(w+v)^{(\sigma-1)/\sigma}\big( (w+v)^r-v^r\big)\\
 &\geqslant \varphi_R\;(w+v)^{(\sigma-1)/\sigma} w^r\\
 &\geqslant \varphi_R\;w^{(\sigma-1)/\sigma+r} .
\end{align*}
Hence, 
\begin{align*}
 CR^{-2}\geqslant \varphi_R^{2/m}(x_R) \;w^{(\sigma-1)/\sigma+r-1}(x_R).
\end{align*}
Choosing $m>0$ such that $2/m=(\sigma-1)/\sigma+r-1$, we arrive at
\begin{align*}
 CR^{-2}\geqslant \varphi_R^{2/m}(x_R) \;w^{2/m}(x_R)=M_R^{2/m},
\end{align*}
which is impossible when $R$ is large. If $r<1$ then the concavity of function $f(s)=s^r$ in $(0, \infty)$ implies that 
\begin{equation}\label{mean}
(w+v)^r-v^r \geqslant rw (v+ w)^{r-1}
\end{equation}
at $x=x_R$. We deduce from \eqref{deltaw}, \eqref{sds} and \eqref{mean} that, at $x=x_R$, 
\begin{align*}
CR^{-2}\varphi_R^{1-2/m}w&\geqslant \varphi_R\;(w+v)^{(\sigma-1)/\sigma}\big( (w+v)^r-v^r\big)\\
&\geqslant \varphi_R\;(w+v)^{(\sigma-1)/\sigma} rw (v+ w)^{r-1}\\
&=r\varphi_R\;(w+v)^{(\sigma-1)/\sigma+r-1}\; w\\
&\geqslant r\varphi_R\;w^{(\sigma-1)/\sigma+r}. 
\end{align*}
Hence, 
\begin{align*}
CR^{-2}\geqslant \varphi_R^{2/m}(x_R) \;w^{(\sigma-1)/\sigma+r-1}(x_R).
\end{align*}
Again, choosing $m>0$ such that $2/m=(\sigma-1)/\sigma+r-1$, we arrive at
\begin{align*}
CR^{-2}\geqslant \varphi_R^{2/m}(x_R) \;w^{2/m}(x_R)=M_R^{2/m},
\end{align*}
which is impossible when $R$ is large. The proof is complete.


\section{Pointwise inequality for the parabolic system (\ref{parasystem}): Proof of Theorem~\ref{th16}}

In this section, we prove Theorem~\ref{th16}. Denote
\[
w = u-l v^\sigma,
\]
where $\sigma := (r+1)/(p+1)$ and $l :=\sigma^{-1/(p+1)}$.  By a direct computation and taking into account $\sigma \in (0,1]$, we have
\begin{align}
\Delta w &\geqslant \Delta u- l\sigma v^{\sigma-1}\Delta v\notag \\
&=u_t-v^r+ l\sigma v^{\sigma-1}(u^p-v_t)\notag\\
&=l\sigma v^{\sigma-1}(u^{p}-l^pv^{\sigma p})+w_t.\label{5.1}
\end{align}
We  prove $w \leqslant 0$ by way of contradiction. Suppose that 
\[
M=\sup\limits_{{\mathbb R}^n\times \mathbb R} w(x,t) > 0.
\] 
(Clearly $M\leqslant +\infty$.) There are two possible cases:

\medskip\noindent\textbf{Case 1}. If $(x_0,t_0)$ is a global maximum point of $w$ in ${\mathbb R}^n\times \mathbb R$, that is 
$$w(x_0,t_0) = \sup_{(x,t) \in {\mathbb R}^n\times \mathbb R} w(x)=M>0$$
 with $\Delta w(x_0,t_0) \leqslant 0$ and $w_t(x_0,t_0) = 0$. We have a contradiction with \eqref{5.1} at $(x_0,t_0)$.

\medskip\noindent\textbf{Case 2}. The supremum of $w$ is attained as $|x|+|t|^{1/2}\to \infty$. 
Let $\psi : {\mathbb R}^N \to [0,1]$ be a smooth cuff-off function in ${\mathbb R}^N$ such that $\psi=1$ if $ |x|+|t|^{1/2}\leqslant 1/2$ and $\psi=0$ if $|x|+|t|^{1/2}>1$. Let also $\varphi=\psi^m$ for some $m>0$ to be chosen later. Then we have the upper bound
\begin{equation}\label{bound1}
|\varphi_t|+ |\Delta \varphi|+ \varphi^{-1}|\nabla \varphi|^2\leqslant C\varphi^{1-2/m}.
\end{equation}
For each $R>0$ we set
\[ 
\varphi_R(x,t)=\varphi(x/R, t/R^2), \quad
w_R = \varphi_R w.
\]
Since $w_R$ is zero if $|x|+|t|^{1/2}>R$, then there exists $(x_R, t_R)$ such that
$$M_R:=\max\limits_{{\mathbb R}^n\times \mathbb R} w_R= w_R(x_R, t_R).$$ 
The choice of $\varphi$ implies that $M_R\to M$ as $R\to \infty$. The property of local maximum gives
\[
\left\{
\begin{split}
w_t=&-\frac{(\varphi_R)_t}{\varphi_R}\,w,\\
\nabla w=&-\frac{\nabla \varphi_R}{\varphi_R}\,w,\\
0\geqslant & \Delta w_R \qquad \text{ at } (x_R,t_R).
\end{split}
\right.
\]
Combining this with \eqref{5.1}, at $(x_R, t_R)$, we have 
\[
\begin{split}
0\geqslant &\varphi_R\,\Delta w +2\nabla \varphi_R\cdot\nabla w+\Delta\varphi_R\,w\\
\geq&\varphi_R\Big(l\sigma v^{\sigma-1}(u^{p}-l^pv^{\sigma p})+w_t\Big)+2\nabla \varphi_R\cdot\nabla w+ \Delta\varphi_R w\\
=& \varphi_R\,l\sigma v^{\sigma-1}(u^{p}-l^pv^{\sigma p}) - (\varphi_R)_t w-2\varphi_R^{-1}|\nabla \varphi_R|^2 w+ \Delta\varphi_R w
\end{split}
\]
This combined with \eqref{bound1} yields
\begin{equation}\label{5.3}
CR^{-2}\varphi_R^{1-2/m}w\geqslant \varphi_R\,l\sigma v^{\sigma-1}(u^{p}-l^pv^{\sigma p})
\end{equation}
at $(x_R, t_R)$. We now consider following two subcases.

\medskip\noindent\textit{Case 2.1}. If the sequence $v(x_R, t_R)$ is bounded, then  $v^{\sigma-1}(x_R, t_R)$ is bounded below away from zero. Recalling $p>1$, then using the elementary inequality 
\begin{equation}\label{triangle}
(a+b)^p- a^p\geqslant b^p, \quad a,b\geqslant 0,
\end{equation}
 it follows from \eqref{5.3}  that, at $(x_R, t_R)$ 
\begin{align*}
CR^{-2}\varphi_R^{1-2/m}w&\geqslant C\varphi_R\;w^p.
\end{align*}
Hence, 
\begin{align*}
CR^{-2}\geqslant \varphi_R^{2/m}(x_R,t_R) \;w^{p-1}(x_R,t_R).
\end{align*}
Choosing $m>0$ such that $2/m=p-1$, we arrive at
\begin{align*}
CR^{-2}\geqslant \varphi_R^{p-1}(x_R,t_R) \;w^{p-1}(x_R,t_R)=M_R^{p-1},
\end{align*}
which is impossible when $R$ is large. 

\medskip\noindent\textit{Case 2.2}. If the sequence $v(x_R,t_R)$ is unbounded, then, up to a subsequence, we may assume that $$\lim\limits_{R\to\infty} v(x_R,t_R)=\infty.$$
Since $pr>1$ and $p>1$, there exists  $\varepsilon>0$ small enough such that $pr-1-\varepsilon(r+1)>0$ and $p>1+\varepsilon$. For $0<b<a$, using the convexity of function $f(s)=s^{p/(1+\varepsilon)}$ in $(0, \infty)$ and \eqref{triangle},  we have
\begin{align*}
a^p-b^p&=(a^{1+\varepsilon})^{p/(1+\varepsilon)}-(b^{1+\varepsilon})^{p/(1+\varepsilon)}\\
&\geqslant \frac{p}{1+\varepsilon}(b^{1+\varepsilon})^{p/(1+\varepsilon)-1}(a^{1+\varepsilon}-b^{1+\varepsilon}) \\ 
&= \frac{p}{1+\varepsilon}(b^{p-\varepsilon-1})(a^{1+\varepsilon}-b^{1+\varepsilon})\\
&\geqslant  \frac{p}{1+\varepsilon}(b^{p-\varepsilon-1})(a-b)^{1+\varepsilon}.
\end{align*}
Taking $a=u(x_R, t_R)$, $b=lv^{\sigma}(x_R, t_R)$, we deduce from \eqref{5.3} that, at $(x_R, t_R)$,  
\begin{align*}
CR^{-2}\varphi_R^{1-2/m}w&\geqslant \varphi_R\, v^{\sigma-1}v^{\sigma(p-\varepsilon-1)}w^{1+\varepsilon}\\
&=\varphi_R v^{\frac{pr-1-\varepsilon(r+1)}{p+1}}w^{1+\varepsilon}\\
&\geqslant  C\varphi_R w^{1+\varepsilon}.
\end{align*}
where $C>0$ independent of $R$,  and in the last inequality we  used the unboundedness of the sequence $v(x_R,t_R).$
Hence, 
\begin{align*}
CR^{-2}\geqslant \varphi_R^{2/m}(x_R,t_R) \;w^{\varepsilon}(x_R,t_R).
\end{align*}
Again, choosing $m>0$ such that $2/m=\varepsilon$, we arrive at
\begin{align*}
CR^{-2}\geqslant \varphi_R^{\varepsilon}(x_R,t_R) \;w^{\varepsilon}(x_R,t_R)=M_R^{\varepsilon},
\end{align*}
which is impossible when $R$ is large. Theorem is proved.


\section*{Acknowledgments}
 The authors are deeply grateful to Professor Dong Ye for his valuable comments and suggestions on the preminilary version of this work. This work was done while QAN and QHP were visiting the Vietnam Institute for Advanced Study in Mathematics (VIASM). They gratefully acknowledges the institute for its hospitality and support. The research of QAN is funded by Vietnam National Foundation for Science and Technology Development (NAFOSTED) under grant number 101.02-2016.02. VHN was supported by CIMI postdoctoral research fellowship.



\end{document}